\theoremstyle{plain}  
\newtheorem{thm}{Theorem}[section]
\newtheorem{lem}[thm]{Lemma}
\newtheorem{prop}[thm]{Proposition}
\theoremstyle{definition}  
\newtheorem{defn}[thm]{Definition}
\newtheorem{ex}[thm]{Example}
\theoremstyle{remark}  
\newtheorem{rem}[thm]{Remark}
\newcommand{\setof}[1]{\left\{ {#1}\right\}}
\newcommand{\bzero}{{\bf 0}}
\newcommand{\bone}{{\bf 1}}
\newcommand{\R}{{\mathbb{R}}}
\newcommand{\Z}{{\mathbb{Z}}}
\newcommand{\cF}{{\mathcal F}}
\newcommand{\cI}{{\mathcal I}}
\newcommand{\cK}{{\mathcal K}}
\newcommand{\cM}{{\mathcal M}}
\newcommand{\cS}{{\mathcal S}}
\newcommand{\cT}{{\mathcal T}}
\newcommand{\cX}{{\mathcal X}}
\newcommand{\sJ}{{\mathsf J}}
\newcommand{\sK}{{\mathsf K}}
\newcommand{\sL}{{\mathsf L}}
\newcommand{\sQ}{{\mathsf Q}}
\newcommand{\mvmap}{\rightrightarrows}
\newcommand{\sInvset}{{\mathsf{ Invset}}}
\newcommand{\Con}{\mathop{\rm Con}\nolimits}
\newcommand{\sABlock}{{\mathsf{ ABlock}}}
\newcommand{\Inv}{\mathop{\mathrm{Inv}}\nolimits}
\newcommand{\Int}{\mathop{\mathrm{int}}\nolimits}
\newcommand{\id}{\mathop{\mathrm{id}}\nolimits}
\newcommand{\Prob}{\mathbb{P}}
\title{Conditioned Weiner Processes as Nonlinearities: A Rigorous Probabilistic Analysis of Dynamics}
\author[a]{Konstantin Mischaikow\footnote{mischaik@math.rutgers.edu}}
\author[b]{Cameron Thieme\footnote{cameron.thieme@rutgers.edu}}
\affil[a]{Department of Mathematics, Rutgers, The State University of New Jersey, Piscataway,
NJ, 08854, USA}
\affil[b]{DIMACS, Rutgers, The State University of New Jersey, Piscataway, NJ, 08854, USA}
\begin{document}
\maketitle
\begin{abstract}
We study a Weiner process that is conditioned to pass through a finite set of points and consider the dynamics generated by iterating a sample path from this process.  Using topological techniques we are able to characterize the global dynamics and deduce the existence, structure and approximate location of invariant sets.  Most importantly, we compute the probability that this characterization is correct.  This work is probabilistic in nature and intended to provide a theoretical foundation for the statistical analysis of dynamical systems which can only be queried via finite samples.  

\end{abstract}

\section{Introduction}

Motivated by \cite{bogdan}, in this paper we identfy the dynamics generated by sample paths of a Gaussian process $f = \setof{f(x)}_{x\in X}$ conditioned to pass through a set of data points $\cT = \setof{(x_n,y_n)}_{n=0}^N$ where $X=[x_0,x_N]$ (we specify the mean and covariance of this process precisely in Section~\ref{Sec: brownian}).
More precisely, we provide a characterization of local and global dynamics on $X$ and an exact probability that this description is valid for a random sample path from $f$.  

Although for the sake of simplicity we remain in a probabilistic setting, and avoid describing any sampling procedures, we are motivated by the problem of understanding a dynamical system that we are able to access only through finite samples of the same form as $\cT$. 
To understand the significance of our results (and those of \cite{bogdan}) let us first consider a typical approach to this problem.  
Given the data set $\cT$, a variety of well-known techniques are available to generate an surrogate function $\hat{f}$ \cite{gramacy}.
If $\hat{f}$ is accepted as the model, then the standard theoretical and computational techniques of nonlinear dynamics can be applied.
However, because dynamical systems are subject to bifurcations, it is not clear that the dynamics generated by $\hat{f}$ and the true dynamics that generated the data are the same, i.e., conjugate.
In fact, the behavior of nonlinear dynamics is sufficiently rich that distinguishing conjugacy classes requires uncountably many invariants \cite{conj_prob} and thus identifying the conjugacy class is impossible with finite data.

With this in mind we propose to characterize the dynamics via the concept of a Morse tiling \cite{lattice3}.

\begin{defn}
Let $g: Y \to Y$ be a continuous function on a compact metric space $Y$ and $\sQ$ a finite partially ordered set with partial order $\preceq$.  
A {\bf Morse tiling} of $Y$ for $g$ is a decomposition of $Y$ into a collection
\[
\cM = \setof{M(q) \mid q \in \sQ}
\]
of regular closed sets with disjoint interiors with the following property.
Let  $y \in Y$.
If $g^n(y) \in M(p)$, $g^m(y) \in M(q)$, and $n<m$, then $q\preceq p$. 

The sets $M(q)$ are {\bf Morse tiles} for $g$ and the partially ordered set $\sQ$ is a {\bf Morse graph} for $g$.
\end{defn}

The partial order on $\sQ$ implies that if the trajectory leaves a Morse tile, then it cannot return.
Therefore, recurrent dynamics of $g$  occurs within individual Morse tiles.
This provides a global decomposition of the dynamics where the gradient-like behavior is characterized by the Morse graph.

As is described in Section~\ref{Sec: conley} an algebraic topological invariant, the Conley index, can be assigned to each Morse tile.
The significance of this is that the Conley index can be used  to deduce the existence of interesting invariant sets for $g$ such as fixed points, periodic orbits, heteroclinic orbits, bistability, and chaotic dynamics \cite{mischmroz}.

The main result of this paper (see Theorem~\ref{Thm:TileBnd} for the technical details) is that we can produce a Morse tiling (as well as the associated Conley indices) and provide an exact formula for the probability that this Morse tiling is valid for a random sample path from $f$. 
There is an important caveat: our Gaussian process $f$ is a sequence of Brownian bridges with variance parameter $\sigma^2$ interpolating the set $\cT$.

An outline of the paper is as follows.
In Section~\ref{Sec: brownian} we state all of our probabilistic assumptions along with a simple proposition that is necessary for our computations.  
For the convenience of the reader we recall in Section~\ref{Sec: order}  basic definitions and concepts from order theory.  
In Section~\ref{Sec: conley} we recall basic ideas from dynamical systems and combinatorial Conley theory.  
The main results of the paper are presented in Section~\ref{Sec: results}.  
Section~\ref{Sec: examples} covers several examples designed to illustrate the techniques used in this paper.  
Finally, we provide some concluding remarks and comment on future directions in Section~\ref{Sec: concl}.

\section{Brownian Paths and Excursion Bounds}\label{Sec: brownian}

In this section we state our basic probabilistic assumptions and a proposition that will be used extensively throughout the article.  We begin by recalling the definitions of some simple Gaussian processes.  

A Gaussian process is uniquely specified by its mean and covariance function \cite{adtay}.  The Gaussian process $W=\setof{W(x)}_{x \geq 0}$ such that $\text{E}(W(x)) = 0$ and $\text{Cov}(W(x),W(x')) =\min(x,x')$ is called the {\bf standard Weiner process} or {\bf standard Brownian motion}.  More generally, if $x_a,x_b,y_a,y_b \in \R$ and $x_a < x_b$ then the Gaussian process $Z = \setof{Z(x)}_{x \in [x_a,x_b]}$ with $\text{E}(Z(x)) = y_a$ and $\text{Cov}(Z(x),Z(x')) = \sigma^2 \min(x,x')$ for all $x,x' \in [x_a,x_b]$, where $\sigma >0$, 
is the {\bf Weiner process (or Brownian motion) on the interval $[x_a,x_b]$, starting at $(x_a,y_a)$, with variance parameter $\sigma^2$}.  A {\bf Brownian bridge $B = \setof{B(x)}_{x \in [x_a,x_b]}$ from $(x_a,y_a)$ to $(x_b,y_b)$ with variance parameter $\sigma^2$} has the law of $Z$ conditioned to take the value $y_b$ when $x = x_b$.   Then $B$ is a Gaussian process satisfying $\text{E}(B(x)) = y_a + \frac{x - x_a}{x_b - x_a}(y_b - y_a)$ for all $x\in [x_a,x_b]$ and $\text{Cov}(B(x),B(x')) = \sigma^2 \frac{(x - x_a)(x_b - x')}{x_b - x_a}$ for $x_a \leq x \leq x' \leq x_b$.  

With these definitions in mind we discuss the Gaussian process $f$ that we  study in this paper.  
We begin with a collection of $N+1$ points, $\cT = \setof{(x_n,y_n)}_{n=0}^N$, where we assume that $x_n < x_m$ for $n < m$; recall that we have defined $X:= [x_0,x_N]$.  
For simplicity of exposition we also assume that
$$ x_0 < y_n < x_N \text{ for } 0\leq n \leq N.$$
As indicated in the introduction, we will assume that $f$ is Brownian motion with variance parameter $\sigma^2$ conditioned on the events $\setof{f(x_n) = y_n}_{n=0}^N$.  More specifically, $f = \setof{f(x)}_{x\in X}$ is the Gaussian process with mean $\mu(x):=\text{E}(f(x))$ and covariance $\kappa(x,x'):=\text{Cov}(f(x),f(x'))$ given by
\begin{itemize}
    \item $\mu(x) = y_{n-1} + \frac{x - x_{n-1}}{x_n - x_{n-1}}(y_n - y_{n-1})$ for all $x \in  [x_{n-1},x_n]$ and 
    \item $\kappa(x,x') = 
    \begin{cases}
    \sigma^2 \frac{(x - x_{n-1})(x_n - x')}{x_n - x_{n-1}}, & x_{n-1} \leq x \leq x' \leq x_{n}\\
    0, & \text{otherwise}
    \end{cases}$.  
\end{itemize}
Note that we can think of this process as a sequence of $N$ independent Brownian bridges $B_n$ from $(x_{n-1},y_{n-1})$ to $(x_n,y_n)$ with variance parameter $\sigma^2$.  

Our ultimate goal is to characterize the dynamics of $f$ on $X$.  In order to do so we will need to know the probability that $f$ stays between two particular threshold values on various combinations of intervals.  That is, we are interested in computing the probabilities of events $$ S_n(\alpha,\beta) := \setof{f(x) \in (\alpha,\beta)\,|\, x \in [x_{n-1},x_n]}.$$
Because the process $f$ may be considered a sequence of independent Brownian bridges, this probability is equivalent to $\Prob( B_n(x) \in (\alpha, \beta) \,|\, x \in [x_{n-1},x_n])$.  We can obtain this value exactly, which is a key motivation for our choice of Gaussian process. 

In order to obtain $\Prob(S_n(\alpha,\beta))$ we will make use of the function $\pi:\R^4 \to \R$ defined by 
\[
\pi(a,b,c,d) := \sum_{m=1}^\infty 
\exp\left( -2[m^2ab + (m-1)^2cd + m(m-1)(ad + c b)]\right)
\]
\[
+ \exp\left( -2[(m-1)^2ab + m^2cd + m(m-1)(ad + c b)]\right)
\]
\[
- \exp\left( -2[m^2(ab+cd) + m(m-1)ad + m(m+1)c b]\right)
\]
\[
- \exp\left( -2[m^2(ab+cd) + m(m+1)ad + m(m-1)c b]\right).
\]

\begin{lem}[\cite{doob}, (4.3)]\label{Lem: Doob_exact}
Let $a,c \geq 0$ and $b,d > 0$.  Then
\[
\Prob\left[\sup_{0\leq x<\infty}\{W(x) - (ax + b)\} \geq 0 \,\text{OR}\, \inf_{0\leq x \leq \infty}\{ W(x) + c x +d\} \leq 0 \right]= \pi(a,b,c,d).
\]

\end{lem}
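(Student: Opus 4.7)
The plan is to compute the complementary probability
\[
q := 1 - \pi(a,b,c,d) = \Prob\bigl(-cx-d < W(x) < ax+b \text{ for all } x \geq 0\bigr),
\]
i.e., the probability that the Brownian path stays forever in the wedge bounded above by $L_+(x)=ax+b$ and below by $L_-(x)=-cx-d$. The guiding principle is the method of images (iterated reflection principle), together with the standard one-barrier identity $\Prob(\sup_{x\geq 0}(W(x)-\alpha x)\geq \beta)=e^{-2\alpha\beta}$ for $\alpha,\beta>0$, which is obtained from optional stopping applied to the exponential martingale $\exp(2\alpha W(t)-2\alpha^2 t)$.

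The first step is to truncate at a finite horizon $T$ and set $q_T := \Prob(L_-(x)<W(x)<L_+(x)$ for all $x\in[0,T])$, so that $q_T\downarrow q$ by monotone convergence. Condition on $W(T)=z$: the process becomes a Brownian bridge from $(0,0)$ to $(T,z)$, and the conditional probability that the bridge stays in the wedge admits an exact image-series expansion. Concretely, one enumerates the sequences of alternating reflections through $L_+$ and $L_-$ of the starting point $(0,0)$, obtaining a discrete family of image points indexed by $(m,\text{pattern})$ where $m$ counts reflections and the pattern records which barrier was hit first and the parity. Each image contributes with the sign dictated by inclusion-exclusion over the crossings.

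The second step is to multiply by the Gaussian density of $W(T)$ and integrate in $z$. Because an image point obtained by $m$ alternating reflections sits at vertical displacement that is linear in $m$ with slope determined by the geometry of the wedge, the Gaussian integration over $z$ produces, in the $T\to\infty$ limit, an exponential of the form $\exp(-2\,Q(m))$ where $Q(m)$ is a quadratic form in $m$ whose coefficients come from $ab$, $cd$, $ad+cb$. The four exponential types inside $\pi(a,b,c,d)$ then correspond precisely to the four choices of (which barrier is crossed first) $\times$ (parity of the total number of reflections). Summing over $m\geq 1$ and subtracting from $1$ yields $\Prob(E)=\pi(a,b,c,d)$.

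The principal obstacle is the combinatorial bookkeeping. One must write down the coordinates of the $m$-th image explicitly, check that each of the four alternating patterns produces exactly one of the four exponents in the definition of $\pi$, and verify convergence of the series (which follows comfortably once the leading exponent in $m$ is identified as strictly negative provided $a,c\geq 0$ and $b,d>0$). A secondary subtlety is justifying the interchange of the image sum with the Gaussian integral and with the limit $T\to\infty$; dominated convergence applies because the $m$-th image contributes a term bounded by $e^{-\text{const}\cdot m^2}$ uniformly in $T$ large, which is summable. Once these two items are handled, the identity $\pi(a,b,c,d)=\Prob(E)$ reduces to matching coefficients in the resulting quadratic form.
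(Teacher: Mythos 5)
The paper does not prove this lemma at all: it is imported verbatim from Doob's 1949 paper (his equation (4.3)), so there is no in-text argument to compare against. Your plan is the classical route by which that formula is actually derived in the literature (Doob, and in fully explicit form Anderson's 1960 treatment of two linear barriers): pass to the complementary event, condition on $W(T)=z$ to reduce to a bridge, apply the iterated reflection/inclusion--exclusion expansion for two linear barriers, integrate out $z$, and let $T\to\infty$. Each individual mechanism you invoke is sound --- in particular the single-barrier sanity check works: integrating $\exp(-2b(aT+b-z)/T)$ against the $N(0,T)$ density of $z$ recovers exactly $e^{-2ab}$ in the limit, which is the $m=1$ leading term of $\pi$.

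The gap is that the proposal stops exactly where the content of the lemma begins. The lemma is not ``some alternating series of Gaussian-type exponentials exists''; it is the specific quadruple of quadratic forms $m^2ab+(m-1)^2cd+m(m-1)(ad+cb)$, $(m-1)^2ab+m^2cd+m(m-1)(ad+cb)$, $m^2(ab+cd)+m(m-1)ad+m(m+1)cb$, and $m^2(ab+cd)+m(m+1)ad+m(m-1)cb$, with those particular signs. You defer precisely this --- the coordinates of the $m$-th image, the correspondence between the four reflection patterns and the four exponent families, and the sign bookkeeping --- as ``the principal obstacle,'' so the identity $\Prob(E)=\pi(a,b,c,d)$ is never actually verified. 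To count as a proof, those exponents must be derived, e.g.\ by iterating the pairwise bridge-crossing factor $\exp\!\left(-2\,L(0)\bigl(L(T)-z\bigr)/T\right)$ through alternating barriers and tracking the accumulated linear-in-$z$ terms before integrating. One further small inaccuracy: your convergence claim that the leading exponent is ``strictly negative provided $a,c\geq 0$ and $b,d>0$'' is false at $a=c=0$, where the leading coefficient $2(a+c)(b+d)$ vanishes, every summand is $1+1-1-1=0$, and the formula degenerates (the true crossing probability is $1$ there); the argument needs $a+c>0$, which does hold in the regime the paper actually uses in Proposition~\ref{prop:BasicBound}.
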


Using Lemma \ref{Lem: Doob_exact} we are able to derive the following equation.

\begin{prop}\label{prop:BasicBound}
Assume that $\alpha < \min(y_{n-1},y_n) \leq \max(y_{n-1}, y_n) < \beta$.  Then 
$$\Prob\left( S_n(\alpha,\beta) \right) 
= 1 - \pi\left(\frac{(\beta - y_n)}{\sigma \sqrt{x_n - x_{n-1}}}, \frac{(\beta - y_{n-1})}{\sigma \sqrt{x_n - x_{n-1}}} ,\frac{(y_n-\alpha)}{\sigma \sqrt{x_n - x_{n-1}}}, \frac{(y_{n-1} - \alpha)}{\sigma \sqrt{x_n - x_{n-1}}}\right).$$
\end{prop}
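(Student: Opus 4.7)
The plan is to reduce the Brownian-bridge event $S_n(\alpha,\beta)$ to a Wiener-process boundary-crossing event of the precise form handled by Lemma~\ref{Lem: Doob_exact}, via rescaling, mean subtraction, and Doob's standard bridge/motion time-change.

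First I would normalize the time interval and remove the linear trend. Let $L := x_n - x_{n-1}$. Rescaling time by $s = (x - x_{n-1})/L$ turns $B_n$ into a Brownian bridge $\tilde{B}(s)$ on $[0,1]$ from $(0,y_{n-1})$ to $(1,y_n)$ with variance parameter $\sigma^2 L$ (the factor $L$ arising from the covariance $\sigma^2 s(1-s')/L \cdot L^2 = \sigma^2 L \cdot s(1-s')$). Subtracting the linear mean $\mu(s) = y_{n-1} + s(y_n - y_{n-1})$ gives a centered bridge $B_0(s) := \tilde{B}(s) - \mu(s)$ from $(0,0)$ to $(1,0)$ with the same variance parameter. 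The event $S_n(\alpha,\beta)$ becomes
\[
\{\alpha - \mu(s) < B_0(s) < \beta - \mu(s) \text{ for all } s \in [0,1]\}.
\]

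Next I would apply the classical bridge-to-motion transform. If $W$ is a standard Wiener process with variance parameter $\sigma^2 L$, then $B_0(s) := (1-s)\,W\!\bigl(s/(1-s)\bigr)$ has precisely the law of our centered bridge. Substituting $t = s/(1-s)$ (so $s = t/(1+t)$, $1-s = 1/(1+t)$) the upper boundary becomes
\[
W(t) < (1+t)\bigl(\beta - \mu(s(t))\bigr) = (\beta - y_{n-1}) + t\,(\beta - y_n),
\]
and the lower boundary becomes
\[
W(t) > (\alpha - y_{n-1}) + t\,(\alpha - y_n) = -\bigl[(y_{n-1}-\alpha) + t\,(y_n-\alpha)\bigr],
\]
for all $t \in [0,\infty)$. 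Finally, dividing through by $\sigma\sqrt{L}$ rescales $W$ to a standard Wiener process, and the four slope/intercept constants become
\[
a = \frac{\beta - y_n}{\sigma\sqrt{L}},\quad b = \frac{\beta - y_{n-1}}{\sigma\sqrt{L}},\quad c = \frac{y_n - \alpha}{\sigma\sqrt{L}},\quad d = \frac{y_{n-1} - \alpha}{\sigma\sqrt{L}},
\]
all nonnegative (with $b,d>0$) thanks to the hypothesis $\alpha < \min(y_{n-1},y_n) \le \max(y_{n-1},y_n) < \beta$. The event $S_n(\alpha,\beta)$ is the complement of the event in Lemma~\ref{Lem: Doob_exact} with these parameters, so $\Prob(S_n(\alpha,\beta)) = 1 - \pi(a,b,c,d)$, yielding the stated formula.

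The routine but slightly delicate step is bookkeeping: confirming that the variance parameter $\sigma^2$ for $B_n$ produces exactly the standard-Wiener rescaling factor $\sigma\sqrt{x_n - x_{n-1}}$, and that the bridge-to-motion time-change sends the two (originally non-parallel) linear boundaries $\alpha - \mu(s),\,\beta - \mu(s)$ onto exactly the two half-line envelopes $-(ct+d)$ and $at+b$ of Doob's lemma. The genuine analytic content — the exact evaluation of the two-sided crossing probability via the reflection-principle series — is precisely what Lemma~\ref{Lem: Doob_exact} provides, so no further hard estimation is required.
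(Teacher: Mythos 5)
Your proposal is correct and follows essentially the same route as the paper: both represent the bridge as a time-changed Wiener process via $s \mapsto s/(1-s)$, convert the two linear envelopes $\alpha-\mu$, $\beta-\mu$ into the half-line boundaries $-(ct+d)$ and $at+b$, and invoke Doob's formula (Lemma~\ref{Lem: Doob_exact}) for the complementary crossing event. The only difference is cosmetic bookkeeping (you normalize to $[0,1]$ and center the mean first, while the paper folds the mean and scale into a single representation formula), and your resulting parameters $a,b,c,d$ match the statement exactly.
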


Notice that if either $\alpha$ or $\beta$ is in the interval $[\min(y_{n-1},y_n) , \max(y_{n-1}, y_n)]$ then the probability is trivially zero.  

\begin{proof}
In order to simplify the notation we will prove the result for a Brownian bridge $B$ with variance $\sigma^2$ from $(0,p)$ to $(T,q)$; no generality is lost with this shift.  It is straightforward to verify that $B$ may be represented as 
$$B(x) = \frac{T-x}{\sqrt{T}}\sigma W\left(\frac{x}{T-x}\right) + p + x\left(\frac{q - p}{T}\right)$$
with $B(T) = q$.  We will use the transformation $s = \frac{x}{T-x}$.  Thus we see the following:

\begin{align*}
    \sup_{x \in [0,T)} B(x) &= \sup_{x \in [0,T)} \frac{T-x}{\sqrt{T}}\sigma W(\frac{x}{T-x}) + p + x\left(\frac{q-p}{T}\right)\\
    &= \sup_{s \in [0,\infty)} \left( \frac{T - \frac{sT}{1+s}}{\sqrt{T}}\right) \sigma W(s) + p + \frac{Ts}{1+s}\left(\frac{q-p}{T}\right)\\
    &= \sup_{s \in [0,\infty)} \left( \frac{\sigma \sqrt{T}}{1+s}\right)  W(s) + p + \frac{s(q-p)}{1+s}.
\end{align*}
From this characterization we see that the following events are equivalent. 
\begin{align*}
    \sup_{x \in [0,T)} B(x) \leq \beta &\iff \sup_{s \in [0,\infty)}\left[ \left( \frac{\sigma \sqrt{T}}{1+s}\right)  W(s) + p + \frac{s(q-p)}{1+s} \right] \leq \beta \\
    &\iff \sup_{s \in [0,\infty)}\left[ \left( \frac{\sigma \sqrt{T}}{1+s}\right)  W(s) + (p - \beta) + \frac{s(q-p)}{1+s} \right] \leq 0\\
    &\iff \sup_{s \in [0,\infty)}\left[ \left( \sigma \sqrt{T}\right)  W(s) + (p - \beta)(1+s) + s(q-p) \right] \leq 0\\
    &\iff \sup_{s \in [0,\infty)}\left[ \left( \sigma \sqrt{T}\right)  W(s) -\left( s(\beta - q) + (\beta - p) \right)\right] \leq 0\\
    &\iff \sup_{s \in [0,\infty)}\left[  W(s) -\left( s\frac{(\beta - q)}{\sigma \sqrt{T}} + \frac{(\beta - p)}{\sigma \sqrt{T}} \right)\right] \leq 0\\
\end{align*}
Similary, we get that
\[
\inf_{x \in [0,T)} B(x) \geq \alpha \iff \inf_{s \in [0,\infty)}\left[   W(s) + \left( s\frac{(q-\alpha)}{\sigma \sqrt{T}} + \frac{(p - \alpha)}{\sigma \sqrt{T}} \right) \right] \geq 0.
\]

The result then follows from Lemma \ref{Lem: Doob_exact}.

\end{proof}

\section{Order Theory}\label{Sec: order}

In this section we recall some basic definitions and notations from order theory that are used in the following section on computational Conley theory.  
For a more complete introduction the reader is refered to \cite{davey:priestley}.

\begin{defn}
A {\bf lattice} $\sL$ is a set with the commutative and associative binary operations $\land,\lor: \sL \times \sL \to \sL$ which satisfy the following absorption axiom for all $a,b,c \in \sL$:
\[
a \land (a \lor b) = a = a \lor (a \land b)
\]
A lattice is {\bf distributive} if for each $a,b,c \in \sL$ it also satisfies the additional axiom
\[
a \lor (b \land c) = (a \lor b) \land (a \lor c).
\]
A lattice is {\bf bounded} if there exist {\bf neutral} elements $\bzero,\bone \in \sL$ with the property that 
\[
\bzero \land a = \bzero, \hspace{0.5cm} \bzero \lor a = a, \hspace{0.5cm} \bone \land a = a, \hspace{0.5cm} \bone \lor a = \bone 
\]
for all $a \in \sL$.
\end{defn}

All lattices used in this paper are both bounded and distributive. 

In the introduction we described our main results using the idea of a poset.  We formally record the definition of this object here.  
\begin{defn}

A {\bf partially ordered set} or {\bf poset} $\sQ $ is a set with an order relation $\leq$ satisfying the three properties:
\begin{enumerate}
    \item {\bf Reflexivity:} $a \leq a$
    \item {\bf Transitivity:} $a\leq b$ and $b \leq c$ $\implies$ $ a\leq c$
    \item {\bf Anti-symmetry:} $a \leq b$ and $b \leq a$ $\implies$ $a=b$
\end{enumerate}
\end{defn}
Given $q\in \sQ$ the {\bf downset} of $q$ is defined as $\downarrow(q):= \setof{q' \in \sQ \mid q' \leq q}$.

An element $c$ of a lattice $\sL$ is {\bf join-irreducible} if $c \neq \bzero$ and $c = a \lor b$ implies that $c = a$ or $c = b$ for all $a,b \in \sL$.  
We denote the set of join-irreducible elements of $\sL$ by $\sJ(\sL)$

Any lattice $\sL$ has a naturally induced partial order relation $\leq$; for any $a,b \in \sL$,
\[
a \leq b \iff a \land b = a.
\]
Since $\sJ(\sL)\subset \sL$ it inherits the partial order relation on $\sL$.
Furthermore, an element $c\in\sL$ is join-irreducible if and only if there exists a unique element $\overleftarrow{c} \in \sL$ such that $\overleftarrow{c} < c$ and there is no $a \in \sL$ with $a\neq \overleftarrow{c}$ such that $\overleftarrow{c} < a < c$;  the element $\overleftarrow{c} \in \sL$ is called the {\bf immediate predecessor} of $c \in \sJ(\sL)$.

Before closing this section let us specify some notation.  Given any lattice $\sL$, the notation $\sL' \hookrightarrow \sL$ will indicate that $\sL'$ is a sublattice of $\sL$; that is, $\sL'$ is a subset of $\sL$, $\sL'$ is itself a lattice and there is an inclusion morphism from $\sL'$ to $\sL$.  We will further assume that any sublattice $\sL' \hookrightarrow \sL$ introduced in this paper contains the same neutral elements $\bzero$ and $\bone$ that bounded the original lattice $\sL$. 

\section{Conley Theory}\label{Sec: conley}

Conley theory has two components: (i) a framework for global decompositions of dynamics, and (ii) algebraic topological tools for reconstructing dynamics.
For this paper the first is represented by the Morse tiling and the second by the Conley index.
For the sake of simplicity we present this theory in the setting of one-dimensional maps where the starting point is the set of data points $\cT$ (see \cite{bogdan} for a more general dimension independent discussion).

As indicated in the introduction given the set of points $\cT = \setof{(x_n,y_n)}_{n=0}^N$ we define the phase space of interest to be $X=[x_0,x_n]$.
We decompose $X$ as a simplicial complex $\cX = \cX(\cT)$ with vertices $\cX^{(0)} := \setof{x_n}_{n=0}^N$ and edges $\cX^{(1)} := \setof{[x_{n-1},x_{n}]}_{n=1}^{N}$.
Viewing the face relation as a partial order $\downarrow([x_{n-1},x_{n}])= \setof{[x_{n-1},x_{n}], x_{n-1},x_n}$. 

We use \emph{combinatorial multivalued maps} to model the dynamics. 
In particular for the dynamics computations we use $\cF^\text{top}\colon \cX^{(1)} \mvmap \cX^{(1)}$  a set valued function $\cF^\text{top}(\xi)\subset \cX^{(1)}$.
To simplify the discussion concerning the Conley index we restrict our attention to \emph{interval valued} maps, i.e.,  for each interval $\xi\in \cX^{(1)}$, $\cF^\text{top}(\xi) = \cup_{i=1}^j [x_i,x_{i+1}]$, i.e., the image of $\cF^\text{top}(\xi)$ is an interval.
In order to perform the algebraic topological computations that determine the Conley index we extend  $\cF^\text{top}$ to $\cF\colon \cX\mvmap\cX$ by setting
\begin{align*}
    \cF([x_{n-1},x_n]) &= \cF^\text{top}([x_{n-1},x_n]) \\
    \cF(x_n) &= \downarrow \left(\cF^\text{top}([x_{n-1},x_n]) \cup \cF^\text{top}([x_{n},x_{n+1}]) \right)
\end{align*}

A combinatorial multivalued map $\cF^\text{top}\colon \cX^{(1)} \mvmap \cX^{(1)}$ is an \emph{outer approximation} of  $g\colon X\to X$ if $g(x)\in \Int(\cF^\text{top}(\xi))$ for $x\in \xi$.

To tie these combinatorial multivalued maps to the information provided by $\cT$ we make use 
of the surrogate map $\mu$ and define
\begin{equation}
\label{eq:minimalMV}
    \cF_\mu^\text{top}(\xi):=\setof{\xi' \in \cX^{(1)} \mid \xi' \cap \mu(\xi) \neq \emptyset}.
\end{equation}
As is shown in \cite{KMV0} $\cF_\mu^\text{top}$ is an outer approximation of $\mu$.
Note that any \emph{enclosure} of $\cF_\mu^\text{top}$,  i.e., $\cF^\text{top}\colon \cX^{(1)} \mvmap \cX^{(1)}$ such that $\cF_\mu^\text{top}\subset \cF^\text{top}(\xi)$ for every $\xi\in\cX^{(1)}$, is an outer approximation of $\mu$.

\begin{rem}
For the remainder of this paper we restrict our attention to multivalued maps $\cF^\text{top}$ that are enclosures of $\cF_\mu^\text{top}$.
\end{rem}

To see how these combinatorial constructions relate to continuous dynamics, recall that a closed set $K\subset Y$ is an {\bf attractor block} for a continuous function $g\colon Y \to Y$ if 
\[
g(K)\subset \Int(K)
\]
where $\Int(K)$ denotes the interior of $K$.
The set of all attractor blocks for $g$ forms a bounded and distributive lattice \cite{lattice1} and is denoted by  $\sABlock(g)$.  
The bounding elements of $\sABlock(g)$ are $\bone = Y$ and $\bzero = \emptyset$.  

Given $\cF^\text{top}\colon \cX^{(1)} \mvmap \cX^{(1)}$ define $\sInvset^+(\cF^\text{top}):=\setof{\cS \subset \cX^{(1)} \mid \cF^\text{top}(\cS) \subset \cS}$; software which computes $\sInvset(\cF^\text{top})$ is available \cite{CMGDB}.
We leave it to the reader to check that if $\cF^\text{top}$ is an enclosure of $\cF_\mu^\text{top}$, then $\sInvset^+(\cF^\text{top})\hookrightarrow \sInvset^+(\cF_\mu^\text{top})$.
As is shown in \cite{lattice2}, $\sInvset^+(\cF^\text{top})$ is bounded distributive lattice with $\wedge =\cap$, $\vee = \cup$, $\bzero = \emptyset$ and $\bone = X$.
Of fundamental importance, as it leads to Proposition~\ref{prop:tiling}, is the following result of \cite{lattice2}:
if  $\cF^\text{top}$ is an outer approximation of $g\colon X\to x$, then $\sInvset^+(\cF^\text{top}) \hookrightarrow \sABlock(g)$.

\begin{prop}
\label{prop:tiling}
Given data set $\cT = \setof{(x_n,y_n)}_{n=0}^N$, simplicial complex $\cX(\cT)$, and combinatorial multivalued map $\cF^\text{top}\colon \cX^{(1)} \mvmap \cX^{(1)}$ that is an enclosure of $\cF_\mu^\text{top}$, consider a lattice $\sK$ such that $\sK \hookrightarrow \sInvset^+(\cF^\text{top}) \hookrightarrow \sInvset^+(\cF_\mu^\text{top}) \hookrightarrow \sABlock(\mu)$.
For each $K \in \sJ(\sK)$, define
\begin{equation}\label{eq: tile}
    M(K):= \text{cl}(K\setminus \overleftarrow{K})
\end{equation}
and further let
\begin{equation}\label{eq: tiling}
    \cM(\sK) := \setof{M(K) \mid K \in \sJ(\sK)}.
\end{equation}
If $\cF^\text{top}$ is an outer approximation of $g\colon X\to X$, then $\cM(\sK)$ is a Morse tiling for $g$ with Morse graph $\sJ(\sK)$.
\end{prop}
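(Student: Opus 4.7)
My plan is to prove Proposition~\ref{prop:tiling} in two stages. First, I establish that $\cM(\sK)$ is a decomposition of $X$ into regular closed sets with pairwise disjoint interiors, qualifying as a tile decomposition in the sense of the Morse tiling definition. Second, I verify the order condition: if $g^n(y)\in M(p)$ and $g^m(y)\in M(q)$ with $n<m$, then $q\preceq p$ in $\sJ(\sK)$ under the partial order inherited from $\sK$.

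For the decomposition, I would argue as follows. For each $y\in X$, define
\[
K(y):=\bigwedge\setof{K\in\sK\mid y\in K}.
\]
Since $\sK$ embeds in $\sABlock(\mu)$ with meet equal to set intersection, $y\in K(y)\in\sK$. An elementary check --- using that the join in $\sABlock(\mu)$ is union --- shows $K(y)$ is join-irreducible: if $K(y)=A\vee B$ with $A,B\in\sK$, then $y\in A\cup B$ forces (by minimality) $K(y)\subset A$ or $K(y)\subset B$, so $K(y)$ equals $A$ or $B$. Minimality also gives $y\notin\overleftarrow{K(y)}$, whence $y\in M(K(y))$ and $X=\bigcup_{K\in\sJ(\sK)}M(K)$. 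Regular-closedness and disjointness of interiors exploit the $1$-simplicial structure of $\cX(\cT)$: each $K\in\sK$ realizes as a union of closed $1$-simplices, and $M(K)$ is exactly the union of those edges of $\cX^{(1)}$ lying in $K$ but not $\overleftarrow{K}$, a finite union of closed intervals equal to the closure of its interior. For disjointness, if $K_1\neq K_2$ in $\sJ(\sK)$ then at least one of $K_1\not\subset K_2$ or $K_2\not\subset K_1$ holds; assume by symmetry the former. Then $K_1\wedge K_2\subsetneq K_1$, and join-irreducibility of $K_1$ yields $K_1\cap K_2\subset\overleftarrow{K_1}$, so no edge can lie in both $K_1\setminus\overleftarrow{K_1}$ and $K_2\setminus\overleftarrow{K_2}$; hence the interiors of $M(K_1)$ and $M(K_2)$ are disjoint.

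For the trajectory condition, suppose $g^n(y)\in M(p)$ and $g^m(y)\in M(q)$ with $n<m$. The chain $\sK\hookrightarrow\sInvset^+(\cF^\text{top})\hookrightarrow\sABlock(g)$, where the final inclusion uses that $\cF^\text{top}$ is an outer approximation of $g$, makes every $K\in\sK$ an attractor block for $g$, i.e., $g(K)\subset\Int(K)$. Since $M(p)\subset p$, we have $g^n(y)\in p$; iterating and using $m-n\geq 1$ gives $g^m(y)\in\Int(p)$. Now suppose, for contradiction, that $q\not\preceq p$ in $\sJ(\sK)$, i.e., $q\not\subset p$. Then $p\wedge q\subsetneq q$, and since $q$ is join-irreducible every element of $\sK$ strictly below $q$ lies below $\overleftarrow{q}$ (a standard consequence of the uniqueness of $\overleftarrow{q}$), so $p\cap q\subset\overleftarrow{q}$. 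Choose an open neighborhood $U\subset\Int(p)$ of $g^m(y)$; then $U\cap q\subset p\cap q\subset\overleftarrow{q}$. But $g^m(y)\in M(q)=\cl(q\setminus\overleftarrow{q})$ provides a sequence $z_k\in q\setminus\overleftarrow{q}$ with $z_k\to g^m(y)$; for $k$ large, $z_k\in U\cap q\subset\overleftarrow{q}$, contradicting $z_k\notin\overleftarrow{q}$. Hence $q\preceq p$.

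The main obstacle I anticipate is the bookkeeping in the first stage: confirming that $M(K)$ is regular closed and that interiors are pairwise disjoint requires interleaving the lattice-theoretic structure of $\sK$ with the simplicial geometry of $\cX(\cT)$, and although straightforward in this $1$-dimensional setting, it is the step most prone to subtle boundary errors. The trajectory argument itself is conceptually clean; its crux is the upgrade from the closed containment $g^n(y)\in p$ to the open containment $g^m(y)\in\Int(p)$, which is precisely the strength of the attractor block condition and which is what eliminates boundary-point pathologies in the final limiting argument.
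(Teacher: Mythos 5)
Your proof is correct and follows the same line of reasoning the paper sketches (the paper itself gives only a two-line intuition and defers the full argument to a reference): each tile $M(K)$ sits inside the attractor block $K$, the attractor-block condition upgrades $g^n(y)\in p$ to $g^m(y)\in\Int(p)$ for $m>n$, and join-irreducibility forces $p\cap q\subset\overleftarrow{q}$ unless $q\subset p$, which is exactly the unproved assertion ``$K'\subset K$'' in the paper's sketch. Your verification that $\cM(\sK)$ is a cover by regular closed sets with pairwise disjoint interiors, which the paper omits entirely, is also sound in this one-dimensional simplicial setting.
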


For a proof of Proposition~\ref{prop:tiling} in a more general setting see  \cite{lattice3}.
However, the intuition behind the result is simple. 
Let $x\in X$. Then $x\in M(K)$ for some $K\in\sJ(\sK)$. This implies that $x\in K\in \sABlock(g)$. Let $n>0$ and assume $g^n(x) \in M(K')$. If $K\neq K'$, then $K'\subset K$ and hence $K' \leq K$. 
 
We now turn to the Conley index and again begin our discussion on the purely combinatorial level.
We assume that we are given $\sK \hookrightarrow \sInvset^+(\cF^\text{top})$. 
We define an {\bf index pair} for $\cF^\text{top}$ to be a pair $\cK = (K_1,K_0)$ where $K_1,K_0 \in \sInvset^+(\cF^\text{top})$  and $K_0 \subset K_1$.  
Then $\cF$ induces a map on homology, $\cF*:H_*(\downarrow(K_1),\downarrow(K_0)) \to H_*(\downarrow(K_1),\downarrow(K_0))$.
We define the {\bf Conley index} of $\cK$ to be the shift equivalence class of $\cF_*$ and denote it by $\Con_*(\cK)$ \cite{mischaikow:weibel:22}.  
In particular we can identify each $K \in \sJ(\sK)$ with the index pair $(K,\overleftarrow{K})$, and so we declare the Conley index of $M(K)$ to be 
\begin{equation}\label{eq: index}
    \Con_*(M(K)):= \Con_*(\downarrow(K),\downarrow(\overleftarrow{K})).
\end{equation}

Turning to continuous dynamics consider a continuous map $g\colon X\to X$.
Given $N \subset X$, the {\bf maximal invariant set} contained in $N$ is given by 
\[
\Inv(N,g):=\setof{x \in N \mid \exists \upsilon:\Z \to N \text{ such that } \upsilon(0) = x \text{ and } \upsilon(k+1) = g(\upsilon(k)) \text{ for all } k \in \Z }.
\]
A compact set $N \subset X$ is an {\bf isolating neighborhood} if $\Inv(N,g) \subset \Int(N)$.  
If $N$ is an isolating neighborhood, then the homology Conley index $\Con_*(\Inv(N,g))$ is well defined \cite{mischmroz}. 

To tie together the combinatorial and continuous theory we note that if $\cF^\text{top}$ is an outer approximation of $g\colon X\to X$, then $K,\overleftarrow{K}\in \sABlock(g)$.
Therefore, $g\colon (K,\overleftarrow{K})\to (K,\overleftarrow{K})$ and $(K,\overleftarrow{K})$ is an index pair for the classical Conley theory \cite{mischmroz}.
Finally, $\cF*:H_*(\downarrow(K_1),\downarrow(K_0)) \to H_*(\downarrow(K_1),\downarrow(K_0))$ and $g_*\colon H_*(K,\overleftarrow{K})\to H_*(K,\overleftarrow{K})$.  Thus we obtain the following result.

\begin{prop}\label{Prop: index}
$\Con_*(M(K))$ is shift equivalent to $\Con_*(\Inv(M(K),g))$.
\end{prop}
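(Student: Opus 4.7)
The plan is to peel apart the two Conley indices in the statement, identify the homology groups on which they act, and invoke the standard comparison between combinatorial and classical Conley indices.

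First, I would note that $\downarrow(K)$ is the full simplicial subcomplex of $\cX$ on those cells contained in $K$, and likewise for $\overleftarrow{K}$. Its geometric realisation is therefore $K$ itself as a subspace of $X$, so simplicial homology supplies a canonical isomorphism $H_*(\downarrow(K),\downarrow(\overleftarrow{K})) \cong H_*(K,\overleftarrow{K})$. Under this identification the two maps $\cF_*$ and $g_*$ appearing in the two Conley indices become endomorphisms of a single relative homology group.

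Second, the paragraph preceding the proposition has already assembled the rest of the data. Because $\cF^\text{top}$ is an outer approximation, both $K$ and $\overleftarrow{K}$ belong to $\sABlock(g)$, so $(K,\overleftarrow{K})$ is a classical index pair and $g$ restricts to a self-map of this pair. The combinatorial Conley index of $M(K)$ is the shift equivalence class of $\cF_*$; the classical Conley index of $\Inv(M(K),g)$ is the shift equivalence class of $g_*$. The content of the proposition is therefore that these two classes coincide.

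Third, the mechanism for that coincidence is the outer approximation condition $g(x)\in\Int(\cF^\text{top}(\xi))$ for $x\in\xi$, combined with the interval-valued assumption on $\cF^\text{top}$: each image $\cF^\text{top}(\xi)$ is a contractible subcomplex, so $\cF$ is an acyclic carrier for $g$ on the pair $(\downarrow(K),\downarrow(\overleftarrow{K}))$. The acyclic carrier theorem then produces a cellular chain map representing $g_*$ that is carried by $\cF$, and this map is chain homotopic to any chain-level representative of $\cF_*$. Hence $\cF_*$ and $g_*$ agree on the common relative homology group, which in particular makes them shift equivalent. This is the content of the combinatorial-versus-classical comparison proved in \cite{mischaikow:weibel:22}, to which I would appeal directly.

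The main obstacle is not any one-dimensional geometry — in the present setting that is essentially trivial — but rather the chain-level bookkeeping needed to make $\cF_*$ a well-defined shift equivalence class in the first place, since $\cF$ is multivalued. That construction has already been carried out in the cited references, so once it is invoked the outer approximation property supplies the common carrier and the argument collapses into the citation.
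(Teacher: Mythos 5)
Your proposal is correct and follows essentially the same route as the paper, which likewise derives the proposition from the observations that $K,\overleftarrow{K}\in\sABlock(g)$ make $(K,\overleftarrow{K})$ a classical index pair, that $H_*(\downarrow(K),\downarrow(\overleftarrow{K}))$ and $H_*(K,\overleftarrow{K})$ are canonically identified, and that the outer approximation property forces $\cF_*$ and $g_*$ to agree, deferring the chain-level details to \cite{mischaikow:weibel:22}. Your explicit mention of the acyclic carrier mechanism simply fills in what the paper leaves inside that citation.
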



\section{Results}\label{Sec: results}

Our construction of a Morse tiling of $X$ for a function $g:X \to X$ relied on a lattice of attractor blocks $\sK$ for $g$.  Since we are interested in obtaining a Morse tiling of $X$ for a sample path from $f$ we would therefore like to know the probability that a finite sublattice $\sK \hookrightarrow \sABlock(\mu)$ is also a lattice of attractor blocks for $f$.  

\begin{rem}
Since attractor blocks are defined for $f$ only if $f \in C(X,X)$, we are interested in the event 
\[
(f \in C(X,X)) \cap (f(K) \subset \text{Int}(K)\forall K \in \sK)
\]
that for the sake of simplicity we denote by $(\sK \hookrightarrow \sABlock(f))$.
Since $\sABlock(\mu)$ is bounded from above by $\bone = X$, and we have assumed that all sublattices will be bounded by the same elements as the original lattice, we have that $\bone = X \in \sK$.  
Thus the event $(f(K) \subset \text{Int}(K)\forall K \in \sK)$ implies the event $(f(x) \in X \forall x \in X)$.
Therefore, since $f$ is continuous with probability $1$, 
\[
\Prob(\sK \hookrightarrow \sABlock(f)) = \Prob((f(K) \subset \text{Int}(K)\forall K \in \sK)).
\]
\end{rem}

Let $\sK \hookrightarrow \sInvset^+(\cF_\mu^\text{top}) \hookrightarrow \sABlock(\mu)$; our goal is to compute $\Prob(\sK \hookrightarrow \sABlock(f))$.  We need notation to describe the elements of $\sK$ in terms of the edges in $\cX$. 
Thus, for fixed $K\in \sK$ we write $K = \bigcup_{m=1}^{M_K} J_m$ where the
\[
J_m = [x_{i_m},x_{j_m}] 
\]
are disjoint closed intervals.
Define the indexing sets $\cI_m(K) = \cI_m = \setof{n \mid i_m < n \leq j_m}$ for $1\leq m\leq M_K$.
By assumption $K$ is an attractor block for $\mu$, and therefore for  each $m$ there is some unique $\tau(m) \in \setof{1,\cdots, M_K}$ such that $\mu(J_m) \subset \Int(J_{\tau(m)})$.

Let $\cI(K) = \cup_{m=1}^{M_K} \cI_m$ and define the maps $\alpha_{K}:\cI({K}) \to \R$ and $\beta_{K}:\cI({K}) \to \R$ as follows. 
\begin{equation}
\label{eq:alphaDefn}
\text{
If $n\in \cI_{m}$ then $\alpha_{K}(n) := x_{i_{\tau(m)}}$ and $\beta_{K}(n) := x_{j_{\tau(m)}}$.
}
\end{equation}
Notice that $\alpha_{K}(n)$ (resp. $\beta_{K}(n)$) is simply the minimum (resp. maximum) of the connected component of ${K}$ which contains $y_{n-1},y_n$.  

The event that $K$ is an attracting block for $f$ is equivalent to $\bigcap_{n\in \cI({K})} S_n(\alpha_{K}(n),\beta_{K}(n))$.  Therefore we have the following result.
\begin{prop}\label{Prop:AttBlBound}
\begin{equation}
    \Prob( f({K}) \subset \Int({K})) =\prod_{n \in \cI({K})} \Prob(S_n(\alpha_{K}(n),\beta_{K}(n)))
  \end{equation}  
\end{prop}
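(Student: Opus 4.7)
The plan is to reduce $\Prob(f(K) \subset \Int(K))$ to the product on the right by combining two ingredients: the equivalence of events asserted immediately before the proposition, and the independence of the Brownian bridges that compose $f$.

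First I would verify rigorously that
$$\{f(K) \subset \Int(K)\} \;=\; \bigcap_{n \in \cI(K)} S_n(\alpha_K(n), \beta_K(n))$$
as events, up to a null set. The containment $\supseteq$ is immediate: if $n \in \cI_m$ and $S_n(\alpha_K(n), \beta_K(n))$ holds, then $f([x_{n-1},x_n]) \subset (\alpha_K(n), \beta_K(n)) = \Int(J_{\tau(m)}) \subset \Int(K)$, and taking the union over $n \in \cI(K)$ gives $f(K) \subset \Int(K)$. For the reverse containment I would use that $f$ is continuous with probability one together with $f(x_n) = y_n$ for each $n$. If $n \in \cI_m$, then $y_{n-1}, y_n \in \mu(J_m) \subset \Int(J_{\tau(m)})$ since $\mu$ is the piecewise-linear interpolant of $\cT$ and $J_m$ is mapped by $\mu$ into $\Int(J_{\tau(m)})$ by the definition of $\tau(m)$. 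The set $f([x_{n-1},x_n])$ is then a connected subset of $\Int(K)$ that meets $\Int(J_{\tau(m)})$ at $y_{n-1}$; because $\Int(K)$ is a finite disjoint union of the open intervals $\Int(J_k)$, $f([x_{n-1},x_n])$ must lie entirely in $\Int(J_{\tau(m)}) = (\alpha_K(n), \beta_K(n))$.

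Second, I would invoke independence. By the construction in Section~\ref{Sec: brownian}, $f$ restricted to $[x_{n-1},x_n]$ coincides in law with the Brownian bridge $B_n$, and $\{B_n\}_{n=1}^{N}$ is a mutually independent family. Since the event $S_n(\alpha_K(n), \beta_K(n))$ is $\sigma(B_n)$-measurable, the events $\{S_n(\alpha_K(n), \beta_K(n))\}_{n \in \cI(K)}$ are independent, and the probability of their intersection factors as the product in the statement.

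The only real technical point to watch is the reverse containment in the set equivalence, where continuity of sample paths is used to rule out a component-to-component jump; apart from this, the argument is essentially bookkeeping, with the nontrivial numerical content sitting inside Proposition~\ref{prop:BasicBound}, which computes each factor on the right-hand side.
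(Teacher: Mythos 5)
Your proof is correct and follows the same route the paper takes (the paper asserts the event identity $\{f(K)\subset \Int(K)\}=\bigcap_{n\in\cI(K)}S_n(\alpha_K(n),\beta_K(n))$ in the sentence preceding the proposition and leaves the factorization via independence of the bridges $B_n$ implicit). Your write-up simply supplies the details the paper omits, in particular the connectedness argument showing $f([x_{n-1},x_n])$ cannot jump between components of $\Int(K)$, which is exactly the right justification for the nontrivial containment.
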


We now aim to extend Proposition~\ref{Prop:AttBlBound} to all of $\sK$ simultaneously.
Let $\cI := \setof{1,\ldots, N}$ and define the map $\gamma:\cI \to \sK$ by
\[
\gamma(n):=\min\setof{K \in \sK \,|\, n \in \cI(K)}
\]
where the minimum is taken with respect to the partial order of $\sK$.  Note that this minimum is well-defined because if $n \in \cI(K)$ and $n \in \cI(K')$ then $K\cap K' \in \sK$ and $n \in \cI(K \cap K')$.  

Observe that the event $S_n(\alpha_{\gamma(n)}(n),\beta_{\gamma(n)}(n))$ implies the event $S_n(\alpha_{K}(n),\beta_{K}(n))$ for all $K$ such that $n \in \cI(K)$; this statement holds because $n\in \cI(K)\cap \cI(K')$ and $K < K'$ implies that $\alpha_{K'}(n) \leq \alpha_{K}(n) < \beta_{K}(n) \leq \beta_{K'}(n)$. Finally, we define the maps $\alpha:\cI \to \R$ and $\beta:\cI \to \R$ by 
\[
\alpha(n):= \alpha_{\gamma(n)}(n), \hspace{.5cm} \beta(n):= \beta_{\gamma(n)}(n).
\]
With this notation introduced we may now state one of the key results of this paper.

\begin{thm}\label{Thm:LattBound}
Let $\sK$ be a sublattice of $\sInvset^+(\cF)$.  The probability that $\sK$ is a lattice of attractor blocks for $f$ is given by the following equation. 
\begin{equation}
    \Prob( \sK \hookrightarrow \sABlock(f)) =\prod_{n \in \cI} \Prob(S_n(\alpha(n),\beta(n)))
  \end{equation}  
\end{thm}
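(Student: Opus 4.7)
The plan is to rewrite the event $\sK \hookrightarrow \sABlock(f)$ as a finite intersection of the single-interval excursion events $S_n$, and then to invoke independence of the Brownian bridges comprising $f$ to factor the probability into a product.

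First I would unwind the definition. Since $f$ is continuous with probability one (see the remark preceding Proposition~\ref{Prop:AttBlBound}),
\[
\Prob(\sK \hookrightarrow \sABlock(f)) = \Prob\!\left(\bigcap_{K \in \sK} \{f(K) \subset \Int(K)\}\right),
\]
and the equivalence used in the proof of Proposition~\ref{Prop:AttBlBound} identifies each event $\{f(K) \subset \Int(K)\}$ with $\bigcap_{n \in \cI(K)} S_n(\alpha_K(n), \beta_K(n))$. Hence
\[
\Prob(\sK \hookrightarrow \sABlock(f)) = \Prob\!\left(\bigcap_{K \in \sK} \bigcap_{n \in \cI(K)} S_n(\alpha_K(n), \beta_K(n))\right).
\]

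Next I would collapse this double intersection onto $\cI$. For each $n \in \cI$ the set $\{K \in \sK \mid n \in \cI(K)\}$ is nonempty (it contains $\bone = X$) and closed under intersection by the sublattice property, so $\gamma(n)$ is a well-defined minimum. The monotonicity observation recorded immediately before the theorem statement gives $[\alpha(n), \beta(n)] \subseteq [\alpha_K(n), \beta_K(n)]$ whenever $n \in \cI(K)$, and therefore $S_n(\alpha(n), \beta(n)) \subseteq S_n(\alpha_K(n), \beta_K(n))$ as events. Consequently the iterated intersection reduces to $\bigcap_{n \in \cI} S_n(\alpha(n), \beta(n))$.

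Finally I would invoke independence: by construction, $f$ restricted to $[x_{n-1}, x_n]$ is the Brownian bridge $B_n$, the bridges $B_1, \ldots, B_N$ are mutually independent, and each event $S_n(\alpha(n), \beta(n))$ is measurable with respect to $B_n$ alone. It follows that
\[
\Prob\!\left(\bigcap_{n \in \cI} S_n(\alpha(n), \beta(n))\right) = \prod_{n \in \cI} \Prob(S_n(\alpha(n), \beta(n))),
\]
which is the claimed identity, with each factor evaluable in closed form via Proposition~\ref{prop:BasicBound}. The only nontrivial point is the collapse step: one has to check that no constraint is discarded when the entire family of $S_n(\alpha_K(n), \beta_K(n))$ at index $n$ is replaced by its minimal representative at $\gamma(n)$. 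This reduction ultimately rests on the lattice-theoretic fact that the relevant family is closed under intersection, which is precisely the sublattice hypothesis on $\sK$.
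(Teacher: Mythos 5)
Your proof is correct and rests on the same essential ingredients as the paper's: the identification of each attractor-block event with an intersection of excursion events $S_n$, the monotonicity observation that only the minimal constraint $S_n(\alpha(n),\beta(n))$ need be retained at each index $n$ (valid because $\gamma(n)$ is comparable to every $K$ with $n\in\cI(K)$), and the independence of the Brownian bridges. The only difference is organizational: the paper fixes a linear extension of the order on $\sK$ and telescopes conditional probabilities $\Prob(V_P\mid \cap_{q<P}V_q)$, whereas you collapse the double intersection directly and invoke independence once at the end --- a somewhat more streamlined route to the same product.
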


\begin{proof}

Recall that the lattice $\sK$ is naturally imbued with a partial order $\leq$.  Choose a linear extension $\leq'$ of the partial order $\leq$.  This allows us to write $\sK = \setof{K_q \mid 1 \leq' q \leq' Q}$ where the labelling of the sets $K_q$ respects the partial order of $\sK$; that is, if $K_p \subset K_q$ then $p \leq q$.  

For each $1\leq q \leq Q$ and each $n\in \cI$, define $S_n^q$ to be the event $f([x_{n-1},x_n]) \subset (\alpha_{K_q}(n),\beta_{K_q}(n))$.  Let $V_q$ be the event that $K_q \in \sABlock(f)$; then $V_q =\cap_{n \in \cI(K_q)} S_n^q$.  Our goal is to compute
$$\Prob( \sK \hookrightarrow \sABlock(f)) = \Prob(\cap_{q=1}^Q V_q) = \Prob(V_1)\Prob(V_2|V_1)\Prob(V_3|V_1\cap V_2)\cdots\Prob(V_Q|\cap_{q=1}^{Q-1}V_q).$$
In order to compute that value we will use the following intermediary step for any $1\leq P \leq Q$.
\begin{align*}
    \Prob(V_P | \cap_{q=1}^{P-1}V_q) &= \Prob(\cap_{n\in \cI(K_p)} S_n^P | (\cap_{n \in \cI(K_1)} S_n^1)\cap(\cap_{n \in \cI(K_2)} S_n^2)\cap\cdots\cap(\cap_{n \in \cI_{P-1}} S_n^{k-1}))\\
    &= \Prob(\cap_{n\in \cI(K_P)} S_n^P | \cap_{n \in (\cup_{p = 1}^{P-1}\cI(K_p))}S_n(\alpha(n),\beta(n)))\\
    &= \Prob(\cap_{n\in \cI(K_P)\setminus (\cup_{p = 1}^{P-1}\cI(K_p))} S_n^P)\\
    &= \Prob(\cap_{n\in \cI(K_P)\setminus (\cup_{p = 1}^{P-1}\cI(K_p))} S_n(\alpha(n),\beta(n)))\\
    &= \prod_{n\in \cI(K_P)\setminus (\cup_{p = 1}^{P-1}\cI(K_p))}\Prob(S_n(\alpha(n),\beta(n)))\\
\end{align*}

The result then follows from this computation:

\begin{align*}
    \Prob(\cap_{q=1}^Q V_q)
    &= \Prob(V_1)\Prob(V_2|V_1)\Prob(V_3|V_2\cap V_1)\cdots \Prob(V_Q|\cap_{q=1}^{Q-1}V_q)\\
    &= \prod_{q = 1}^Q \prod_{n \in \cI(K_q)\setminus(\cup_{p = 1}^{q-1}\cI(K_p))} \Prob(S_n(\alpha(n),\beta(n)))\\
    &= \prod_{n \in \cI} \Prob(S_n(\alpha(n),\beta(n)))
\end{align*}

\end{proof}

As indicated in Section~\ref{Sec: conley}, knowing a lattice of attractor blocks $\sK$ for a function allows us to determine a Morse tiling of the domain for that function.  In order to compute the Conley index of each of the associated Morse tiles, however, we must have an outer approximation of the function of interest; therefore we will now construct a combinatorial multivalued map $\cF_\sK^\text{top}:\cX^{(1)}\mvmap \cX^{(1)}$ that is an outer approximation of $f$ whenever $\sK \hookrightarrow \sABlock(f)$.  

In fact, $\cF_\sK^\text{top}$ is essentially defined by the construction used in Theorem~\ref{Thm:LattBound}.  For each $\xi_n:=[x_{n-1},x_n] \in \cX^{(1)}$ let 
\begin{equation}
\label{eq:cFsK}
    \cF_\sK^\text{top}(\xi_n):=[\alpha(n),\beta(n)].
\end{equation}

\begin{prop}\label{Prop: outappx}
Let $\sK \hookrightarrow \sInvset^+(\cF_\mu)$ and define $\cF_\sK$ by Equation~\eqref{eq:cFsK}.  The following properties hold.
\begin{enumerate}
    \item $\cF_\sK^\text{top}$ is an enclosure of $\cF_\mu^\text{top}$.\label{enclosure}
    \item $\sK \hookrightarrow \sInvset^+(\cF_\sK^\text{top})$\label{invset}
    \item $\cF_\sK^\text{top}$ is an outer approximation of $f$ if and only if $(\sK \hookrightarrow \sABlock(f))$.\label{outappx}
\end{enumerate}
\end{prop}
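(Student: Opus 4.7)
The plan is to prove (\ref{enclosure}) and (\ref{invset}) by unwinding the definitions, and then prove (\ref{outappx}) via a connectedness argument that exploits the fact that the sample path $f$ is pinned to the interpolation data $\cT$.

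For (\ref{enclosure}), I would fix $\xi_n \in \cX^{(1)}$ and let $J_m$ denote the connected component of $\gamma(n)$ containing $\xi_n$. Since $\gamma(n) \in \sInvset^+(\cF_\mu^\text{top}) \hookrightarrow \sABlock(\mu)$, one has $\mu(J_m) \subset \Int(J_{\tau(m)}) = (\alpha(n),\beta(n))$, and in particular $\mu(\xi_n) \subset (\alpha(n),\beta(n))$. Because $\alpha(n), \beta(n) \in \cX^{(0)}$ are vertices of $\cX$, any edge $\xi'$ that meets the open interval $(\alpha(n),\beta(n))$ must be entirely contained in $[\alpha(n),\beta(n)] = \cF_\sK^\text{top}(\xi_n)$; this gives $\cF_\mu^\text{top}(\xi_n) \subset \cF_\sK^\text{top}(\xi_n)$. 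For (\ref{invset}), I would fix $K \in \sK$ and $n \in \cI(K)$. By minimality of $\gamma(n)$, $\gamma(n) \leq K$ and hence $\gamma(n) \subset K$. The interval $[\alpha(n),\beta(n)]$ is the component of $\gamma(n)$ containing $\{y_{n-1},y_n\}$, so $\cF_\sK^\text{top}(\xi_n) \subset \gamma(n) \subset K$. Taking the union over $n \in \cI(K)$ yields $\cF_\sK^\text{top}(K) \subset K$.

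For (\ref{outappx}) I would argue both directions separately. \emph{Forward:} assume $\sK \hookrightarrow \sABlock(f)$. By the Remark preceding Theorem~\ref{Thm:LattBound}, $f \in C(X,X)$, and since $\bone = X \in \sK$ we have $f(X) \subset \Int(X) \subset X$, so the outer approximation condition is meaningful. Fix $\xi_n$ and $x \in \xi_n$. Since $\gamma(n) \in \sABlock(f)$ and $x \in \gamma(n)$, $f(x) \in \Int(\gamma(n))$. Moreover $f(\xi_n)$ is connected (continuous image of an interval), lies in $\Int(\gamma(n))$, and by construction contains $f(x_{n-1}) = y_{n-1}$ and $f(x_n) = y_n$. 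Therefore $f(\xi_n)$ is confined to the unique component of $\Int(\gamma(n))$ containing $\{y_{n-1},y_n\}$, namely $(\alpha(n),\beta(n)) = \Int(\cF_\sK^\text{top}(\xi_n))$. \emph{Backward:} assume $\cF_\sK^\text{top}$ is an outer approximation of $f$, so $f \in C(X,X)$ and $f(\xi_n) \subset (\alpha(n),\beta(n))$ for every $n$. For $K \in \sK$ and $x \in K$, choose an edge $\xi_n \subset K$ with $x \in \xi_n$. As in (\ref{invset}), $[\alpha(n),\beta(n)] \subset \gamma(n) \subset K$, so $[\alpha(n),\beta(n)]$ is contained in a single connected component $J'$ of $K$; since $\alpha(n),\beta(n) \in J'$, the open interval $(\alpha(n),\beta(n))$ lies in $\Int(J') \subset \Int(K)$, and therefore $f(x) \in \Int(K)$.

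The main obstacle is the forward direction of (\ref{outappx}). Attractor-block membership supplies only the global inclusion $f(\gamma(n)) \subset \Int(\gamma(n))$, a set that typically has several components, and a priori the restriction $f|_{\xi_n}$ could land in any of them. What rescues the argument is the conditioning that forces $f$ through the two fixed values $y_{n-1}$ and $y_n$, both lying in the single component $J_{\tau(m)}$; combined with the connectedness of the continuous image $f(\xi_n)$, this pins the entire restriction inside the designated open interval $(\alpha(n),\beta(n))$.
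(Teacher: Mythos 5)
Your proof is correct and follows essentially the same route as the paper: the crux in both is that $f(\xi_n)$ is connected and pinned through $y_{n-1},y_n\in\Int(J_{\tau(m)})$, so it cannot reach any other component of $\Int(\gamma(n))$ (the paper phrases this same argument contrapositively). The only divergences are cosmetic: you spell out parts (1) and (2), which the paper dismisses as immediate from the construction, and you prove the ``outer approximation $\Rightarrow$ attractor blocks'' direction by a direct elementary argument where the paper instead cites the general result that $\sInvset^+(\cF_\sK^\text{top})\hookrightarrow\sABlock(f)$ whenever $\cF_\sK^\text{top}$ is an outer approximation of $f$.
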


\begin{proof}
Properties \ref{enclosure} and \ref{invset} follow directly from the construction of $\cF_\sK^\text{top}$ and Theorem~\ref{Thm:LattBound}.  Further, if $\cF_\sK^\text{top}$ is an outer approximation of $f$ then $\sInvset^+(\cF_\sK) \hookrightarrow \sABlock(f)$ \cite{lattice2} and so $\sK \hookrightarrow \sInvset^+(\cF_\sK)\hookrightarrow \sABlock(f)$.  
Therefore it remains only to show that $\sK \hookrightarrow \sABlock(f)$ implies that $\cF_\sK^\text{top}$ is an outer approximation for $f$; we prove this by the contrapositive and assume that $\cF_\sK^\text{top}$ is not an outer approximation for $f$.  Then there exists $x$ such that $f(x) \not \in \Int(\cF_\sK^\text{top}(\xi_n))$ where $x \in \xi_n = [x_{n-1},x_n]$. Consider $\gamma(n) \in \sK$ and write $\gamma(n) = \bigcup_{m=1}^{M_{\gamma(n)}} J_m$.  Let $m'$ be the integer such that $\xi_n \subset J_{m'}$.  By construction $\cF_\sK^\text{top}$ is constant on each $J_m$ and hence $f(x) \not \in \Int(\cF_\sK^\text{top}(J_{m'}))= \Int(\cF_\sK^\text{top}(\xi_n))$.  Again by construction $\cF_\sK^\text{top}(J_{m'}) = J_{\tau(m')}$ and so $f(x) \not \in \Int(J_{\tau(m')})$; thus $f(J_{m'}) \not \subset \Int(J_{\tau(m')})$ and so $f(K) \not \subset \Int(K)$ (here we use the fact that the $J_m$ are disjoint intervals, and so $f(K) \subset \Int(K)$ only if each of these intervals map into the interior of another one).
\end{proof}

We now obtain the main result of this paper as a direct consequence of Theorem~\ref{Thm:LattBound} and Propositions~\ref{prop:tiling}, \ref{Prop: index}, and \ref{Prop: outappx}.  

\begin{thm}\label{Thm:TileBnd}
Let $\sK \hookrightarrow \sInvset^+(\cF_\mu^\text{top})$ and define $M(K)$ and $\Con_*(M(K))$ for each $K \in \sJ(\sK)$ by Equations~\eqref{eq: tile} and \eqref{eq: index} respectively.  Then with probability 
\[\prod_{n \in \cI} \Prob(S_n(\alpha(n),\beta(n)))\]
$\cM(\sK) = \setof{M(K) \mid K \in \sJ(\sK)}$ is a Morse tiling of $X$ for $f$ and $\Con_*(M(K))$ is the Conley index of each Morse tile.  
\end{thm}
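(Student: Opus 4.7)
My plan is to assemble the theorem directly from the four ingredients cited by the authors: Theorem~\ref{Thm:LattBound}, and Propositions~\ref{prop:tiling}, \ref{Prop: index}, and \ref{Prop: outappx}. The probability appears only once, and thereafter the argument is purely deterministic on the event $\sK \hookrightarrow \sABlock(f)$.

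First, I would apply Theorem~\ref{Thm:LattBound} to the sublattice $\sK$, which yields
\[
\Prob\bigl(\sK \hookrightarrow \sABlock(f)\bigr) = \prod_{n \in \cI} \Prob(S_n(\alpha(n),\beta(n))).
\]
Recall from the remark preceding Theorem~\ref{Thm:LattBound} that the event on the left implicitly includes $f \in C(X,X)$, so that on this event $f$ is a continuous self-map of $X$ and all of the classical Conley-theoretic notions (invariant sets, isolating neighborhoods, Conley index) are available.

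Next, I would work deterministically on the event $\sK \hookrightarrow \sABlock(f)$. Part~\ref{outappx} of Proposition~\ref{Prop: outappx} then gives that $\cF_\sK^\text{top}$ is an outer approximation of $f$. Combining this with part~\ref{invset} of the same proposition, we have $\sK \hookrightarrow \sInvset^+(\cF_\sK^\text{top})$, and $\cF_\sK^\text{top}$ is an enclosure of $\cF_\mu^\text{top}$ by part~\ref{enclosure}. The hypotheses of Proposition~\ref{prop:tiling} are therefore met with $g = f$ and the combinatorial map taken to be $\cF_\sK^\text{top}$, and that proposition delivers the conclusion that $\cM(\sK) = \setof{M(K) \mid K \in \sJ(\sK)}$ is a Morse tiling for $f$ with Morse graph $\sJ(\sK)$.

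Finally, for each $K \in \sJ(\sK)$, I would apply Proposition~\ref{Prop: index} to identify the combinatorial Conley index $\Con_*(M(K)) = \Con_*(\downarrow(K), \downarrow(\overleftarrow{K}))$ with $\Con_*(\Inv(M(K),f))$ up to shift equivalence, which is precisely the assertion that $\Con_*(M(K))$ is the Conley index of the corresponding Morse tile for $f$. Since both the Morse tiling property and the Conley index identification hold deterministically on the event $\sK \hookrightarrow \sABlock(f)$, the probability of the joint conclusion is bounded below by the probability of that event, which equals the product formula above. I do not foresee a serious obstacle; the main subtlety is purely bookkeeping, namely ensuring that the probability factor is consumed at the very first step and never reappears, and that continuity of $f$ is absorbed into the event $\sK \hookrightarrow \sABlock(f)$ so that the continuous-dynamical statements of Propositions~\ref{prop:tiling} and \ref{Prop: index} are legitimately applicable.
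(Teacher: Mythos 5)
Your proposal is correct and follows exactly the route the paper takes: the paper states Theorem~\ref{Thm:TileBnd} as a direct consequence of Theorem~\ref{Thm:LattBound} and Propositions~\ref{prop:tiling}, \ref{Prop: index}, and \ref{Prop: outappx}, assembled in precisely the order you describe (probability from the lattice bound, then deterministic Conley-theoretic conclusions on the event $\sK \hookrightarrow \sABlock(f)$). Your bookkeeping about continuity being absorbed into that event matches the remark at the start of Section~\ref{Sec: results}, so there is nothing to add.
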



\section{Examples}\label{Sec: examples}

This section contains three examples that help to explain the results of Section~\ref{Sec: results}.  In each example we plot the mean $\mu$ of the Gaussian process $f$ as well as a random sample path from $f$.  The sample path plotted does not come into our analysis of the examples in any way and is merely included in order to illustrate the ideas at play.

In this first example we begin with an attractor block identified for the mean $\mu$ that is a single closed interval; a single closed interval is the simplest form of an attractor block possible.   We will apply Proposition~\ref{Prop:AttBlBound} and give the probability that this interval is an attractor block for the Gaussian process of interest.   

\begin{ex}\label{Ex: intvl}

Consider  
\begin{align*}
\cT &= \setof{(x_n,y_n) \mid x_n = n/10}_{n=0}^{10}\\
&= \{(0,.4509),(.1,.4999),(.2,.4613),(.3,.455),(.4,.5185),\\
&\, (.5,.4987),(.6,.5398),(.7,.5147),(.8,.5397),(.9,.5221),(1,.5331)\}.
\end{align*}
We will let $f$ be Brownian motion with variance parameter $\sigma^2 = 1$ conditioned to pass through $\cT$.  The mean function of $f$, $\mu$, is the piecewise linear function indicated in blue, shown in Figure~\ref{Fig:ex1_all}.  The combinatorial multivalued map $\cF_\mu^\text{top}$ is shown in Figure~\ref{Fig:ex1_F}; $K := X = [0, 1]$ is an element of $\sInvset^+(\cF_\mu^\text{top})$ and hence $K\in \sABlock(\mu)$.  More directly, we can see that $K$ is an attractor block for $\mu$ precisely because $0 < f(x_n) < 1$ for all $n \in \setof{0,\cdots,10}$, and the piecewise linearity of $\mu$ then implies that $ 0 < f(x) < 1$ for all $x \in [0,1]$.

\begin{figure}[H]

\centering

\begin{subfigure}{0.48\textwidth}
    \includegraphics[width=\textwidth]{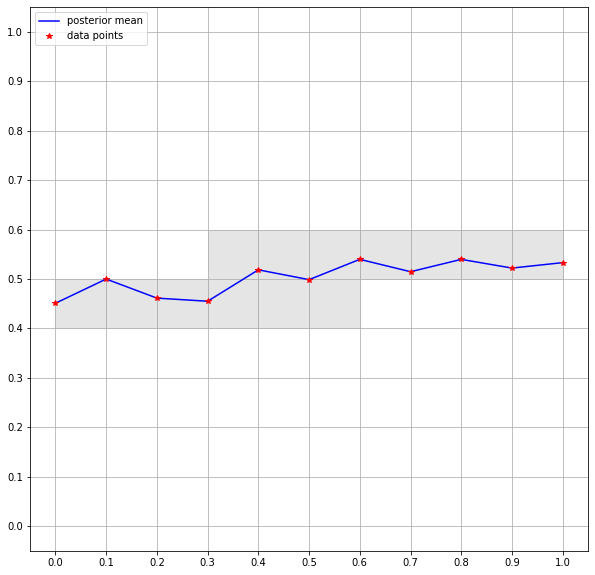}
    \caption{The combinatorial multivalued map $\cF_\mu^\text{top}$ is shown above.  Cells on the horizontal axis map to the collection of cells on the vertical axis that indicated by the grey coloring.}
    \label{Fig:ex1_F}
\end{subfigure}
\hfill 
\begin{subfigure}{0.48\textwidth}
    \includegraphics[width=\textwidth]{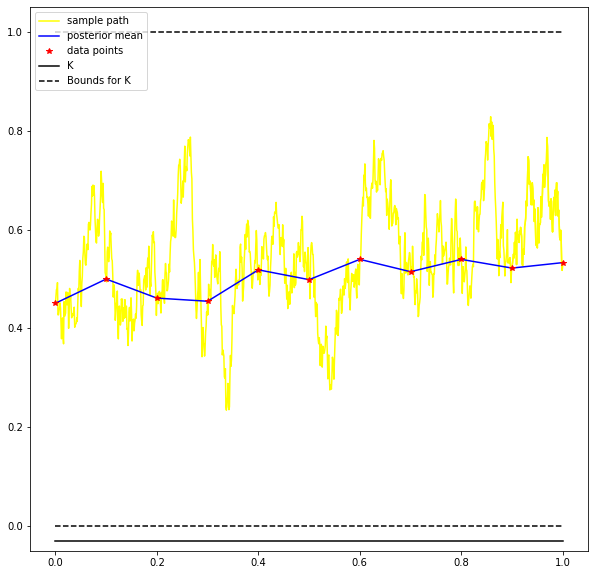}
    \caption{The interval $[0, 1]$ is an attracting block for the sample path because the sample path lies entirely between $0$ and $1$ on the interval $[0,1]$.  The dashed lines define $\cF_\sK^\text{top}$.}
    \label{Fig:ex1_main}
\end{subfigure}
\caption{}
\label{Fig:ex1_all}
\end{figure}

In this case, $\alpha_K(n)$ is identically $\alpha:= 0$, and $\beta_K(n)$ is identically $\beta:=1$. By Proposition~\ref{Prop:AttBlBound} the probability that $K \in\sABlock(f)$ is
\[
\prod_{n=1}^{10} 
     1 - \pi\left(\frac{(\beta - y_n)}{\sigma \sqrt{x_n - x_{n-1}}}, \frac{(\beta - y_{n-1})}{\sigma \sqrt{x_n - x_{n-1}}} ,\frac{(y_n-\alpha)}{\sigma \sqrt{x_n - x_{n-1}}}, \frac{(y_{n-1} - \alpha)}{\sigma \sqrt{x_n - x_{n-1}}}\right)  
\approx
0.8586.
\]

Alternatively, we can view the set $\sK = \setof{\emptyset, K=X}$ as a lattice of attractor blocks for $\mu$ and apply Theorem~\ref{Thm:LattBound} to determine that $\Prob(\sK \hookrightarrow \sABlock(f)) = 0.8586$ as well.  In this example the Morse tiling of $X$ is trivial and $\cM(\sK) = \setof{M(K)=X}$.  Also,
\[
\Con_k(M(K)) = \begin{cases}
\id & \text{if $k=0$} \\
0 & \text{otherwise}
\end{cases}
\]
which implies the existence of a fixed point \cite{szymczak:96,day:frongillo:trevino}.
\end{ex}

In this next example we consider a larger lattice of sets and use Theorem~\ref{Thm:LattBound} in order to identify the probability that this lattice is made up of attractor blocks for $f$.  While the probabilistic ideas are not really any different from the preceding example, the indexing required to keep track of this more sophisticated structure is more complicated.  We remark that this example demonstrates how the techniques developed in this paper may be used to identify bistability in a system.

\begin{ex}

We consider the set 
\begin{align*}
\cT &= \setof{(x_n,y_n) \mid x_n = n/10}_{n=0}^{10}\\
&= \{(0,.2005),(.1,.225),(.2,.2057),(.3,.2025),(.4,.2343),\\
&\, (.5,.5243),(.6,.8449),(.7,.8324),(.8,.8448),(.9,.8361),(1,.8416)\}
\end{align*}
shown in Figure~\ref{Fig:ex2_all}, and let $f$ be Brownian motion with variance parameter $\sigma^2 = 1/16$ conditioned to pass through $\cT$.  We err on the side of verbosity in analyzing this data in order to be totally clear how the indexing works when dealing with a general lattice of attracting blocks.

Using $\cF_\mu^\text{top}$, shown in Figure~\ref{Fig:ex2_F}, we identify five attracting blocks for the posterior mean $\mu$ which form a lattice $\sK$:
$$ K_0 = \emptyset, K_1 = [x_{0},x_{4}], K_2 = [x_{6},x_{10}], K_3 = K_1 \cup K_2, K_4 = [x_{1}, x_{10}]$$
Note that $K_0 \subset K_1 \subset K_3 \subset K_4$ and $K_0 \subset K_1 \subset K_2 \subset K_4$, but $K_1$ and $K_2$ are incomparable. 

The indexing sets are
\begin{align*}
    \cI(K_1) & = \setof{1,2,3,4}, \\
    \cI(K_2) & = \setof{7,8,9,10}, \\
    \cI(K_3) & = \setof{1,2,3,4,7,8,9,10}, \\ 
    \cI(K_4) &= \setof{1,2,3,\cdots,9,10}.
\end{align*} 
For $ q\neq 3$, each component $K_q$ consists only of a single connected component and thus the maps $\alpha_{K_q}:\cI(K_q) \to \R$, $\beta_{K_q}:\cI(K_q) \to \R$ are constant maps:
$$ \alpha_{K_1} \equiv x_{0}, \beta_{K_1} \equiv x_{4}$$
$$ \alpha_{K_2} \equiv x_{6}, \beta_{K_2} \equiv x_{10}$$
$$ \alpha_{K_4} \equiv x_{0}, \beta_{K_4} \equiv x_{10}$$
The maps $\alpha_{K_3}$ and $\beta_{K_3}$ are slightly more complicated since $K_3$ has two connected components, but we do not need to describe them because $\gamma(n) \neq K_3 $ for any $n \in \cI$.  We do need to know $\gamma(n)$ for each $n \in \cI$:
\[
\gamma(n) = 
\begin{cases}
K_1, & n \in \cI(K_1)\\
K_2, & n \in \cI(K_2)\\
K_4, & \text{otherwise}
\end{cases}
\]

\begin{figure}[H]

\centering

\begin{subfigure}{0.48\textwidth}
    \includegraphics[width=\textwidth]{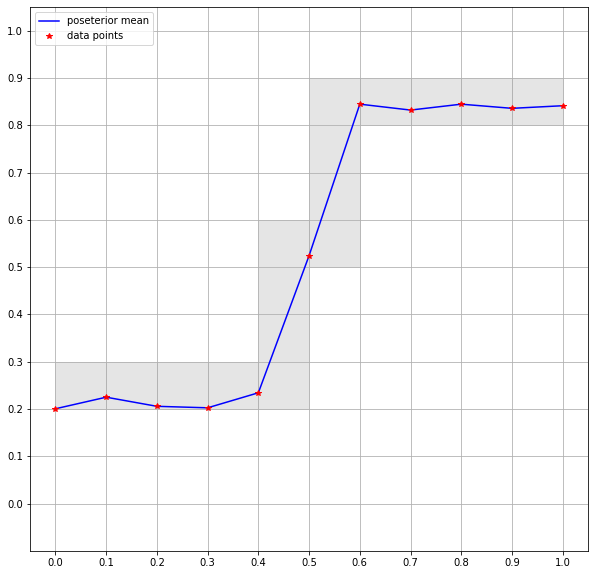}\caption{The combinatorial multivalued map $\cF_\mu^\text{top}$.}
    \label{Fig:ex2_F}
\end{subfigure}
\hfill 
\begin{subfigure}{0.48\textwidth}
    \includegraphics[width=\textwidth]{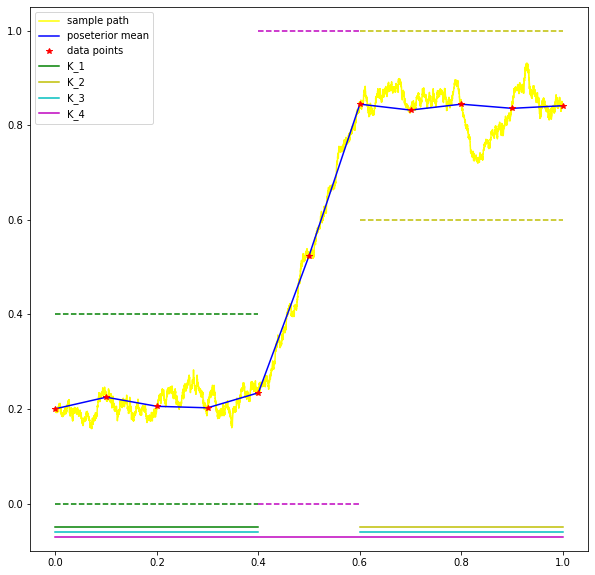}
    \caption{$\sK$ is a lattice of attracting blocks for the sample path shown because the path remains between the indicated (dashed) bounds; these bounds define $\cF_\sK^\text{top}$.}
    \label{Fig:ex2_main}
\end{subfigure}
\caption{}
\label{Fig:ex2_all}
\end{figure}

With this information we are able to calculate $\Prob(\sK \hookrightarrow \sABlock(f))$:
\begin{small}

\[
\prod_{n \in \cI} 
1 - \pi\left(\frac{(\beta(n) - y_n)}{\sigma \sqrt{x_n - x_{n-1}}}, \frac{(\beta(n) - y_{n-1})}{\sigma \sqrt{x_n - x_{n-1}}} ,\frac{(y_n-\alpha(n))}{\sigma \sqrt{x_n - x_{n-1}}}, \frac{(y_{n-1} - \alpha(n))}{\sigma \sqrt{x_n - x_{n-1}}}\right)
\approx 0.9989
\]
\end{small}

Having determined the probability that $\sK$ is a lattice of attractor blocks for $f$ we now provide the associated Morse tiling and Conley indices and discuss what these features indicate about the dynamics of $f$.

The lattice $\sK$ gives the Morse tiling
\[
\cM(\sK) = \setof{M(K_1) = K_1, M(K_2) = K_2, M(K_4) = \text{cl}(K_4\setminus K_3) \mid M(K_4) > M(K_1), M(K_4)>M(K_2)}.
\]
The Conley indices of these Morse tiles are 
\[
\Con_k(M(K_1)) = \Con_k(M(K_2)) = \begin{cases}
\id & \text{if $k=0$} \\
0 & \text{otherwise.}
\end{cases}
\quad\text{and}\quad
\Con_k(M(K_4))  = \begin{cases}
\id & \text{if $k=1$} \\
0 & \text{otherwise}
\end{cases}
\]
This Morse tiling indicates that the dynamical system is bistable, with attractors that contain at least one fixed point in $K_1$ and $K_2$ \cite{szymczak:96,day:frongillo:trevino}.  
Moreover, there is a non-trivial invariant set in $(x_4,x_6)$ with the Conley index of a repelling fixed point.  By Theorem~\ref{Thm:TileBnd}, all of this information is valid for $f$ with probability $0.9989$. 
\end{ex}

Before beginning this final example we make a more general comment.  Given a dynamical system $g:Y \to Y$ and a subset $Y'\subset Y$ such that $g(Y') \subset Y'$, the function $g|_{Y'}:Y' \to Y'$ defines a dynamical system.  Such a restricted system is sometimes of interest when one is concerned only with the local dynamics in the region $Y'$.  These local dynamics can also be understood using Conley theory and the probabilistic techniques that we develop in this paper.  In particular, if one is interested only in understanding the dynamics of $f|_{X'}$, where $X' = \bigcup_{n \in \cI'} [x_{n-1},x_n]$ for $\cI' \subset \setof{1,\cdots,N}$, then we can analyze the dynamics of $X'$ in the same manner as we analyze the dynamics of $X$.  That is, if we let $\cX' $ be a simplicial complex with vertices $\cX'^{(0)} := \setof{x_n}_{n \in \cI'}$ and edges $\cX'^{(1)} := \setof{[x_{n-1},x_{n}]}_{n\in \cI'}$, and for $\cF=\cF_\mu$ or $\cF_\sK$ we define the combinatorial multivalued map $\cF':\cX'\mvmap\cX'$ by $\cF'(\xi):=\cF(\xi)\cap\cX'$, then we are able to give a Morse tiling of $X'$ for $f|_{X'}$ using the same methodology as we did for $X$ and $f$.  

In this final example we exploit this perspective.  We begin by analyzing the dynamics of a Gaussian process defined on the interval $X = [0,1]$. We will see that the lattice of attractor blocks $\sK$ identified for the mean $\mu$ that we define has a fairly low probability of being a lattice of attractor blocks for $f$.  However, we will then note that if we restrict our view to a more local perspective, and instead analyze the same system on the interval $X' = [0.2,0.8]$, we have a reasonably high probability that $f|_{X'}$ contains a periodic orbit.  Hopefully this example demonstrates how an individual who is interested only in certain local information--in this case, a periodic orbit--may increase the probability of seeing the dynamics of interest by restricting their view to a smaller region of phase space.  

\begin{ex}\label{ex: per_orb}

Consider the set 
\begin{align*}
\cT &= \setof{(x_n,y_n) \mid x_n = n/10}_{n=0}^{10}\\
&= \{(0,.03),(.1,.07),(.2,.6853),(.3,.6999),(.4,.6884),\\
&\, (.5,.501),(.6,.255),(.7,.3185),(.8,.2987),(.9,.95),(1,.97)\},
\end{align*}
shown in Figure~\ref{Fig:ex3_all}.  Let $f$ be Brownian motion on $X = [0,1]$ with variance parameter $\sigma^2 = 1/16$ conditioned to interpolate $\cT$.  The mean function $\mu$ of $f$ is the piecewise linear function indicated in blue.  The lattice $\sK = \setof{K_0 = \emptyset, K_1 = [0.2,0.4] \cup [0.6,0.8], K_2 = [0,2,0.8], K_3 = X}$ is a lattice of attractor blocks for $\mu$; we determine this information because $\sK \hookrightarrow \sInvset^+(\cF_\mu^\text{top})$, where $\cF_\mu^\text{top}$ is shown in Figure~\ref{Fig:ex3_F}.  We observe that $K_1$ contains a periodic orbit for $\mu$ and $K_2 \setminus K_1$ contains a repelling fixed point.  Our goal in this example is to show that there is a relatively high probability that $f$ has a periodic orbit and an isolated invariant with the index of an unstable fixed points in these same regions.  

For each of the connected intervals $K_q$, the maps $\alpha_{K_q}$ and $\beta_{K_q}$ are constant maps whose images are, respectively, the left and right endpoints of $K_q$.  However, for $K_1$ the situation is somewhat more complicated.  Note that $\mu([0.2,0.4]) \subset (0.6,0.8)$ and $\mu([0.6,0.8]) \subset (0.2,0.4)$.  Thus 
\[
\alpha_{K_1}(n) = \begin{cases}
0.6, & n = 3,4\\
0.2, & n = 7,8
\end{cases}
\]
and 
\[
\beta_{K_1}(n) = \begin{cases}
0.8, & n = 3,4\\
0.4, & n = 7,8
\end{cases}.
\]
Therefore we have that 
\[
\alpha(n) = \begin{cases}
0, & n = 1,2,9,10\\
0.6, & n = 3,4\\
0.2, & n = 5,6,7,8
\end{cases}
\]
and 
\[
\beta(n) = \begin{cases}
1, & n = 1,2,9,10\\
0.8, & n = 3,4,5,6\\
0.4, & n = 7,8
\end{cases}.
\]
With these maps defined we are able to use Theorem~\ref{Thm:LattBound} to compute
\[
\Prob(\sK \hookrightarrow \sABlock(f)) \approx 0.1199.
\]

This probability is relatively low, but we observe that a sample path is most likely to leave the necessary bounds on $K_4\setminus K_2$; for instance, $\Prob(S_{10}(\alpha(10),\beta(10)) = 0.3812$.  Because we are not concerned with the behavior of $f$ in this region we restrict our view to the domain $X':= K_2$.  For this domain, the lattice $\sK'= \setof{K_0 = \emptyset, K_1, K_2}$ is a full lattice, bounded from above by the domain itself.  Now Theorem~\ref{Thm:LattBound} implies that 
\[
\Prob(\sK' \hookrightarrow \sABlock(f|_{X'})) \approx 0.6283.
\]
\begin{figure}[H]

\centering

\begin{subfigure}{0.48\textwidth}
    \includegraphics[width=\textwidth]{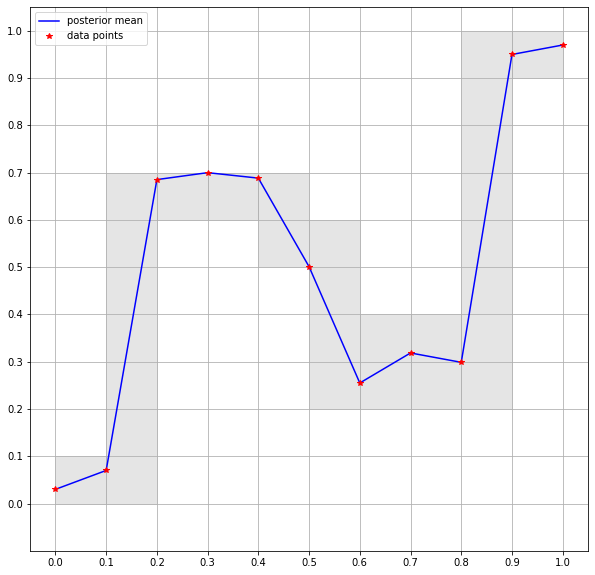}
    \caption{The combinatorial multivalued map $\cF_\mu^\text{top}$.}
    \label{Fig:ex3_F}
\end{subfigure}
\hfill 
\begin{subfigure}{0.48\textwidth}
    \includegraphics[width=\textwidth]{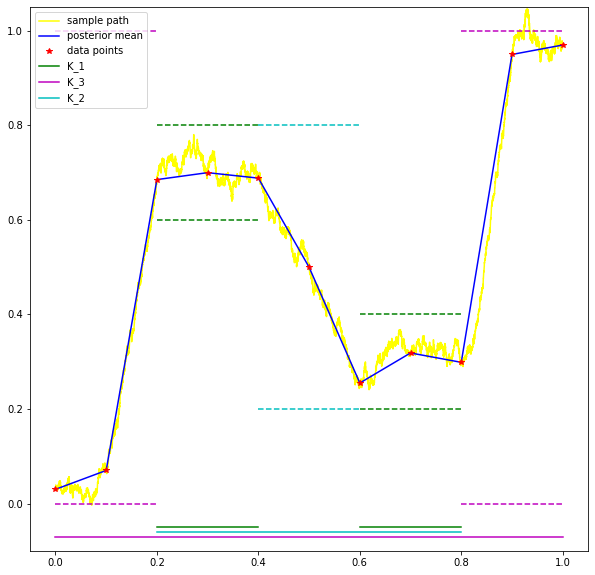}
    \caption{The dashed lines define $\cF_\sK^\text{top}$.  Note that $\cF_\sK^\text{top}$ is not an outer approximation for the sample path shown, but $\cF_{\sK'}^\text{top}$ is an outer approximation of the sample path restricted to $K_2= [0.2,0.8]$.}
    \label{Fig:ex3_main}
\end{subfigure}
\caption{Note that $\sK$ is not a lattice of attracting blocks for the sample path shown because the path leaves the required bounds.  However, $K_1$ and $K_2$ are both attractor blocks for the sample path.  We conclude that the sample path contains a periodic orbit in $K_1$ and an isolated invariant set with the Conley index of an unstable fixed point; the same is true of $f$ with probability $0.6283$.  }
\label{Fig:ex3_all}
\end{figure}

The Morse tiling associated with $\sK'$ is 
\[\cM(\sK') = \setof{M(K_1) = K_1, M(K_2) = \text{cl}(K_2 \setminus K_1) \mid M(K_2) > M(K_1)};
\]
by Theorem~\ref{Thm:TileBnd}, this $\cM(\sK')$ is a Morse tiling for $f$ with probability $0.6283$.  Using $\cF_{\sK'}^\text{top}$, we are able to compute the Conley indices 
\[
\Con_k(M(K_1))  = \begin{cases}
\begin{bmatrix}
0 & 1 \\
1 & 0
\end{bmatrix} & \text{if $k=0$} \\
0 & \text{otherwise.}
\end{cases}
\quad\text{and}\quad
\Con_k(M(K_2))  = \begin{cases}
-\id & \text{if $k=1$} \\
0 & \text{otherwise}
\end{cases}.
\]
These indices imply the existence of a periodic orbit for $f$ in the interior of $K_1$ whenever $\cF_{\sK'}^\text{top}$ is an outer approximation of $f|_{K_2}$ \cite{szymczak:96,day:frongillo:trevino}; thus we conclude that $f$ has a periodic orbit with at least probability $0.6283$.  Therefore, while we cannot make particularly strong claims about the global dynamics of $f$ all of $X$, we are able to identify interesting features of the dynamics on $X' = [0.2,0.8]$ with a reasonable probability.  

\end{ex}


\section{Future Directions and Comparisons to Other Approaches}\label{Sec: concl}

We begin this section by comparing our results to those found in \cite{bogdan}.  The spirit of both papers is the same.  In each of them, combinatorial Conley index theory and Gaussian processes are combined in order to identify dynamics with rigorous probabilities.  However, there are two key differences between these, reflecting a trade-off between the methods used in each case.  

The first difference is that the results of \cite{bogdan} apply to a larger set of Gaussian processes.  The dynamics of any Gaussian process with a covariance kernel that is at least four times differentiable may be studied using the results of that paper.  This requirement means that the Gaussian process $f$ that is analyzed in this paper cannot be studied using the methods of \cite{bogdan}, but the majority of Gaussian processes used in applications--including any process using the squared exponential covariance kernel--can be analyzed using those methods.  

On the other hand, the results of \cite{bogdan} are asymptotic.  In that paper, the set of points $\cT$ that is used to condition the Gaussian process represents sampled data.  The main theorem establishes a procedure that characterizes dynamics where, given a fixed confidence level $\delta\in (0,1)$, it is proven that for a large enough sample $\cT$ the characterization is accurate with confidence level $\delta$.  However, for any single fixed sample $\cT$, the method cannot rigorously identify the confidence that the characterization is accurate.  By contrast, this work can identify the probability that the provided characterization of dynamics is accurate for a fixed $\cT$ and the Gaussian process $f$ interpolating $\cT$.  

Future work should focus on extending the techniques from this paper to higher dimensional data and more general covariance structures.  In order to replicate the results of this paper for more general Gaussian process requires obtaining some understanding of the events $S_n$ in this more general setting.  One natural approach to addressing this problem would be to use the maxima and minima of a Gaussian process in order to estimate the probability of $S_n$.  That is, for a general Gaussian process $G$ with parameter space $\R$, if we let $U_n(\beta) = \setof{\sup_{x\in[x_i,x_j]} G(x) \leq \beta}$ and $L_n(\beta) = \setof{\inf_{x\in[x_i,x_j]} G(x) \geq \alpha}$, then $\Prob(S_n(\alpha,\beta)) \geq 1 - \Prob(U_n(\beta)) - \Prob(L_n(\alpha))$; therefore if we can know (or bound) these probabilities we should be able to obtain lower bounds on the probability that a lattice of attractor blocks identified for the mean of the Gaussian process is also a lattice of attractor blocks for the true map.  One possible route to obtain such bounds would be through Rice's formula \cite{adtay}.

Extending the results to higher dimensional data sets again requires obtaining (or bounding) probabilities like $S_n$.  As mentioned earlier, a closed set $K$ is an attractor block for a map $g$ if $g(K) \subset \Int(K)$; this characterization does not depend on the dimension.  We should be able to give a bound on the probability of such events in any finite dimension $d$ if we can bound the probability that hypercubes map into other hypercubes.  That is, we would like to know or estimate $$\Prob(f(\prod_{k=1}^d[a_k,b_k]) \subset \prod_{k=1}^d[\alpha_k,\beta_k]).$$  While this cubical approach will not allow us to perfectly represent any attractor block in higher dimensions (which have no geometric constraints in general) if we can obtain formulas like this then we will likely be able to extend the results of this paper to higher dimensions with a reasonable level of generality.

Finally, we note that one obvious extension of this work is to develop a statistical procedure which allows us to analyze data sets of the same form as $\cT = \setof{(x_n,y_n)}_{n=0}^N$.  While we worked in a purely probabilistic setting here, we remarked in the introduction that such analysis is our ultimate motivation, and thus future work should attempt to make this generalization. 

\section{Acknowledgements}
The authors would like to thank Harry van Zanten, Ying Hung, and Kasper Larsen; our conversations on Gaussian processes were very valuable in crafting this paper.  

C.T. was partially supported by HDR TRIPODS award CCF-1934924.  K.M. was partially supported by the National Science Foundation under awards DMS-1839294 and HDR TRIPODS award CCF-1934924, DARPA contract HR0011-16-2-0033, and NIH 5R01GM126555-01. K.M. was also supported by a grant from the Simons Foundation.

\bibliography{bib_all.bib} 
\bibliographystyle{abbrv}

\end{document}